\newtheorem{theorem}{Theorem}
\title{\large \bf \textsf{A Combinatorial Proof of the Existence of Dense subsets in $\mathbb{R}$ without the \lq\lq{Steinhaus}\rq\rq~like property.}}
\author{Arpan Sadhukhan }
\date{}
\affil{\textit{Eindhoven University of Technology}}
\affil{\textit{Department of Mathematics and Computer Science}}
\begin{document}

\maketitle

\noindent The Steinhaus theorem states that for any positive measurable set $S$, the difference set $S-S$ contains an open interval around the origin \cite{Tao}. 
It is somewhat natural to ask whether there exists a subset $P \subseteq \mathbb{R}$ such that neither $P-P$ nor $P^c-P^c$ contains any interval in $\mathbb{R}$. Clearly, by the Steinhaus theorem we know that if $P$ is Lebesgue measurable, then $P$ must have measure $0$, in which case we know $P^c-P^c$ has an open interval around $0$. Consequently, such sets must not be Lebesgue measurable. Also, in the domain of nonmeasurable subsets of $\mathbb{R}$, it is not entirely clear that such sets will exist. In this note we prove the existence of such sets which are also dense. The idea for the proof is similar to \cite{Thomas}. Somewhat related, but entirely different results, are found in \cite{Bartoszewicz}.

\begin{theorem}
There exists a dense subset $P \subseteq \mathbb{R}$ such that neither difference set $P-P$ nor $P^c-P^c$ contains any interval in $\mathbb{R}$.
\end{theorem}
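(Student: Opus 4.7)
The plan is to exploit the quotient group $\mathbb{R}/\mathbb{Q}$, viewed as a $\mathbb{Q}$-vector space under the usual scalar action, and to build $P$ as a union of $\mathbb{Q}$-cosets. Writing $\pi:\mathbb{R}\to\mathbb{R}/\mathbb{Q}$ for the canonical surjection, I would seek $P$ of the form $P=\pi^{-1}(S)$ for a suitable $S\subseteq\mathbb{R}/\mathbb{Q}$. Because every coset of $\mathbb{Q}$ is dense in $\mathbb{R}$, density of $P$ is automatic as soon as $S$ is nonempty; and because $P$ is invariant under translation by $\mathbb{Q}$, one checks directly that $P-P=\pi^{-1}(S-S)$ and $P^c-P^c=\pi^{-1}(S^c-S^c)$.

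This reduces the theorem to a combinatorial statement inside the group $\mathbb{R}/\mathbb{Q}$: it suffices to produce $S$ such that neither $S-S$ nor $S^c-S^c$ exhausts $\mathbb{R}/\mathbb{Q}$. Indeed, if some $g\in\mathbb{R}/\mathbb{Q}$ is absent from $S-S$, then the entire coset $\pi^{-1}(g)$ lies outside $P-P$; since $\pi^{-1}(g)$ is a $\mathbb{Q}$-coset and hence dense, every open interval meets it, forcing $P-P$ to contain no interval. The symmetric argument handles $P^c-P^c$.

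The core combinatorial step is producing such an $S$. I would fix any nonzero element $g\in\mathbb{R}/\mathbb{Q}$ and aim for the stronger property that translation by $g$ exchanges $S$ with its complement, i.e.\ $\chi_S(x+g)=1-\chi_S(x)$ for every $x\in\mathbb{R}/\mathbb{Q}$. This single condition makes $g\notin S-S$ and $g\notin S^c-S^c$ both immediate, since no two elements of $S$, and no two elements of $S^c$, can then differ by $g$. To realise such an $S$ I would invoke the Axiom of Choice to select a transversal $\mathcal{T}$ for the action of the cyclic subgroup $\langle g\rangle$ on $\mathbb{R}/\mathbb{Q}$; because $\mathbb{R}/\mathbb{Q}$ is a $\mathbb{Q}$-vector space, $\langle g\rangle\cong\mathbb{Z}$ is torsion-free, so one may consistently set $\chi_S\equiv 1$ on $\mathcal{T}$ and extend by alternation along each $\langle g\rangle$-orbit.

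The main obstacle is really bookkeeping rather than a deep step. One must verify that $\pi^{-1}$ interacts with difference sets in the way claimed (which rests on the $\mathbb{Q}$-invariance of $P$), that the alternating extension of $\chi_S$ is well-defined on each $\langle g\rangle$-orbit and genuinely implies both $g\notin S-S$ and $g\notin S^c-S^c$, and that both $S$ and $S^c$ end up nonempty so that $P$ and $P^c$ are both nontrivial. Once these routine checks are assembled, the density of the coset $\pi^{-1}(g)$ together with the translation-swap rule closes the argument.
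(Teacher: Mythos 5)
Your proposal is correct, and it reaches the theorem by a genuinely different route, even though the combinatorial core (an Axiom-of-Choice parity $2$-colouring along orbits) is the same in spirit. The paper works directly in $\mathbb{R}$: it fixes the explicit dense set $S=\{2N+\tfrac{2k+1}{3^n}\}$, shows by a $3$-adic/parity computation that the graph with edge set $\{\,\{a,b\}:|a-b|\in S\}$ has no odd cycle, and takes $P$ to be one side of the bipartition of each component; then $(P-P)\cup(P^c-P^c)$ misses the dense set $S$, so neither difference set contains an interval. You instead quotient by $\mathbb{Q}$ first, observe that for $\mathbb{Q}$-saturated sets $P=\pi^{-1}(S)$ one has $P-P=\pi^{-1}(S-S)$ and $P^c-P^c=\pi^{-1}(S^c-S^c)$, and then only need a single element $g\neq 0$ of $\mathbb{R}/\mathbb{Q}$ omitted from both $S-S$ and $S^c-S^c$, achieved by alternating along the $\langle g\rangle$-orbits of a transversal. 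Your checks all go through: preimages commute with complements and (for saturated sets) with differences; torsion-freeness of the $\mathbb{Q}$-vector space $\mathbb{R}/\mathbb{Q}$ makes every $\langle g\rangle$-orbit a copy of $\mathbb{Z}$, so the alternating extension is well defined and $\mathcal{T}\subseteq S$, $\mathcal{T}+g\subseteq S^c$ give nonemptiness. What your route buys is that all the density statements become automatic ($P$, $P^c$, and the forbidden set $\pi^{-1}(g)$ are unions of $\mathbb{Q}$-cosets), and the paper's arithmetic verification that $S$ admits no odd vanishing signed sum is replaced by the trivial fact that $\mathbb{R}/\mathbb{Q}$ is torsion-free; what the paper's route buys is an argument phrased entirely inside $\mathbb{R}$ with an explicit dense forbidden set and no quotient-group machinery. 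Note also that your construction can be read as an instance of the paper's scheme in which the ``connection set'' is the dense set $(\pm x_g+\mathbb{Q})$ for a representative $x_g$ of $g$, so the two proofs are close relatives rather than strangers.
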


\begin{proof}

Let $S=\{2N+\frac{2k+1}{3^n}:|2k+1|<3^n \: \text{and} \:  N, k \: \text{are integers}, n \in \mathbb{N}\}$. Thus $S$ is dense in $\mathbb{R}$.
Define an infinite graph $G$ on $\mathbb{R}$ such that two vertices $a,b \in \mathbb{R}$ are adjacent iff $|a-b| \in S$. Now if $G$ has an odd cycle $(v_1,v_2, \ldots v_{2n+1})$, then it is easy to see $0=|v_1-v_1|=2N'+\frac{2k'+1}{3^{n'}}$ where $N', k'$ are integers and $n'\in \mathbb{N}$, which is a contradiction. Hence, $G$ is bipartite.
Let $G=\bigcup_{\alpha \in J} G_\alpha$ where $G_\alpha, \alpha \in J$, represents the connected components of $G$ and $J$ is the index set. Using the axiom of choice, choose a vertex $v_\alpha \in G_\alpha \: \forall \alpha \in J$. Let $P_\alpha$ be the subset of vertices of $G_\alpha$ such that the shortest path from each element of $P_\alpha$ to $v_\alpha$ is of even length. Let $P=\bigcup_{\alpha \in J} P_\alpha$. Observe that the sets $P$ and $P^c$ are dense in $\mathbb{R}$. Also observe that no two vertices in $P$ and $P^c$ are adjacent as $G$ has no odd cycles.  Now, without loss of generality, assume $D=P-P$ contains an interval $I\subseteq \mathbb{R}$. Hence, $D \cap S$ is non-empty, which is a contradiction. This completes the proof.

\end{proof}

\bigskip
\footnoterule

\footnotesize{MSC: Primary 28A05, Secondary 05C15}

\end{document}